\crefname{equation}{}{}
\numberwithin{equation}{section}
\newtheorem{theorem}{Theorem}[section]
\newtheorem{lemma}[theorem]{Lemma}
\newtheorem*{question*}{Question} \Crefname{question}{Question}{Questions}
\theoremstyle{definition}
\theoremstyle{remark}
\newtheorem*{remark}{Remark}
\newcommand{\norm}[1]{\left\lVert#1\right\rVert}
\newcommand{\paren}[1]{\left( #1 \right)}
\newcommand{\KL}{\textrm{KL}}
\newcommand{\bx}{\bm{x}}
\newcommand{\RR}{\mathbb{R}}
\newcommand{\NN}{\mathbb{N}}
\newcommand{\bB}{\bm{B}}
\newcommand{\bS}{\bm{S}}
\newcommand{\vol}{\mathop{\text{vol}}}
\title[Superball packing upper bounds]{Exponential improvements for \\ superball packing upper bounds}
\author[Sah]{Ashwin Sah}
\address{Massachusetts Institute of Technology, Cambridge, MA 02139, USA}
\email{asah@mit.edu}
\author[Sawhney]{Mehtaab Sawhney}
\address{Massachusetts Institute of Technology, Cambridge, MA 02139, USA}
\email{msawhney@mit.edu}
\author[Stoner]{David Stoner}
\address{Department of Mathematics, Stanford University, Stanford, CA 94305, USA}
\email{dwstoner@stanford.edu}
\author[Zhao]{Yufei Zhao}
\address{Department of Mathematics, Massachusetts Institute of Technology, Cambridge, MA 02139, USA}
\email{yufeiz@mit.edu}
\thanks{YZ was supported by NSF Awards DMS-1362326 and DMS-1764176, and the MIT Solomon Buchsbaum Fund.}
\begin{document}

\begin{abstract}
We prove that for all fixed $p > 2$, the translative packing density of unit $\ell_p$-balls in $\RR^n$ is at most $2^{(\gamma_p + o(1))n}$ with $\gamma_p < - 1/p$. This is the first exponential improvement in high dimensions since van der Corput and Schaake (1936).
\end{abstract}

\maketitle

\section{Introduction} \label{sec:intro}

The sphere packing problem asks for the densest packing of non-overlapping unit balls in $\RR^n$. This is an old and difficult problem whose exact solution is only known in dimensions 1, 2, 3, 8, and 24. The problem is already non-trivial in two dimensions (see \cite{Hales00} for a short proof). The three-dimensional sphere packing problem is known as Kepler's conjecture, and it was solved by Hales~\cite{Hales05} via a monumental computer-assisted proof. The problem in eight dimensions was recently resolved by Viazovska~\cite{Via17} in a stunning breakthrough, and the method was then quickly extended to solve the problem in twenty-four dimensions~\cite{CKMRV17}. Dimensions 8 and 24 are special due to the existence of highly dense and symmetric lattices known as the $E_8$ lattice (dimension 8) and the Leech lattice (dimension 24). See the survey~\cite{Cohn17NAMS} and its references for background and recent developments.

In this paper, we study translative packings of $\ell_p$-balls in high dimensions. Denote the $\ell_p$-balls with radius $R$ in $\RR^n$ by $\bB_p^n(R) := \{ \bm x \in \RR^n : \norm{\bm x}_p \le R\}$ and the unit $\ell_p$-ball by $\bB_p^n:=\bB_p^n(1)$. Here $\norm{(x_1, \dots, x_n)}_p := (|x_1|^p + \cdots + |x_n|^p)^{1/p}$ is the $\ell_p$-norm. 
The name \emph{superball} refers to $\ell_p$-balls with $p > 2$ \cite{RS87}.  Superballs are more cube-like compared to the familiar $\ell_2$-balls. See \cite{JST09,JST10,DV18} for studies of $\ell_p$-ball packings in $\RR^3$. Although $\ell_p$-balls do not possess rotational symmetry, in this paper we only consider translations of identical $\ell_p$-balls, not allowing rotations. The best known lower bounds on high dimensional superball packing densities do not use rotations~\cite{EOR91} (see \cref{sec:review}).

Let $\Delta_p(n)$ denote the maximum translative packing density of copies of $\bB_p^n$ in $\RR^n$. Here \emph{density} is the fraction of space occupied by these balls. For fixed $p \in [1,\infty)$, let
\[
\gamma_p := \limsup_{n\to\infty} \frac1n \log_2 \Delta_p(n)
\]
be the exponential rate of optimal packing densities in high dimensions. The precise value of $\gamma_p$ is unknown for any $p \in [1,\infty)$, and the current best  upper and lower bounds are quite far apart. For Euclidean balls, $p =2$, the best high dimensional upper bound (apart from constant factors) is due to Kabatiansky and Levenshtein~\cite{KL78}:
\[
\Delta_2(n) \le 2^{(\kappa_{\KL} + o(1)) n}, \quad \text{where } \kappa_{\KL} := -0.5990\dots.
\]
See Cohn and Zhao~\cite{CZ14} and Sardari and Zargar~\cite{SZ20}
for constant factor improvements over Kabatiansky and Levenshtein~\cite{KL78}. 
For lower bounds, we have $\Delta_p(n) \ge 2^{-n}$ for all $n$ and $p \ge 1$ since every maximal packing has density at least $2^{-n}$. For $p=2$, there have only been subexponential improvements, with the current best lower bound due to Venkatesh~\cite{Ven13}.
In summary, the best bounds on $\gamma_2$ are $-1 \le \gamma_2 \le \kappa_{\KL} = -0.5990\dots$.


\medskip

For $p > 2$, the current best upper bound on the exponential rate of superball packing densities was first proved by van der Corput and Schaake~\cite{CS36} via Blichfeldt's method~\cite{Bli29} (e.g., see \cite[Section 6.3]{Zong}), giving
\[
\gamma_p \le -1/p \quad \text{for } p > 2.
\]
There have been subsequent subexponential upper bound improvements on $\Delta_p(n)$ for $p > 2$, e.g., Rankin \cite{RankinI, RankinII}. We defer to \cref{sec:remarks} for a discussion of known bounds on $\gamma_p$ in other regimes.

In this paper, we prove a new upper bound on $\gamma_p$ for all $p > 2$, giving the first exponential improvement since 1936 on the upper bound of superball packing densities in high dimensions.

\begin{theorem} \label{thm:main}
For all $p \ge 2$,
\[
\gamma_p \le \inf_{0 < \theta < \pi/2}\left(\frac{1 + \sin\theta}{2\sin\theta}\log_2\frac{1 + \sin\theta}{2\sin\theta} - \frac{1 - \sin\theta}{2\sin\theta}\log_2\frac{1 - \sin\theta}{2\sin\theta}+\frac{2}{p}\log_2\sin\frac{\theta}{2}\right).
\]
In particular, $\gamma_p < -1/p$ for all $p \ge 2$.
\end{theorem}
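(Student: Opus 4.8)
\emph{Proof proposal.} The plan is to transpose the Kabatiansky--Levenshtein sphere‑packing bound to the $\ell_p$ setting, replacing the reduction to spherical codes by one fitted to $\ell_p$ geometry. Write
\[
c(\theta) := \frac{1+\sin\theta}{2\sin\theta}\log_2\frac{1+\sin\theta}{2\sin\theta} - \frac{1-\sin\theta}{2\sin\theta}\log_2\frac{1-\sin\theta}{2\sin\theta},
\]
so that the claimed bound is $\gamma_p \le \inf_{0<\theta<\pi/2}(c(\theta) + \tfrac2p\log_2\sin(\theta/2))$; recall that $c(\theta)$ is exactly the Kabatiansky--Levenshtein exponent \cite{KL78}, i.e.\ a code on $S^{n-1}$ of minimal angle $\theta$ has at most $2^{(c(\theta)+o(1))n}$ points. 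Fix a packing $\mathcal P$ of unit $\ell_p$-balls with center set $C$ and lower density $\delta$. Since $\vol\bB_p^n(\rho) = \rho^n\vol\bB_p^n$, the centers have density $\delta/\vol\bB_p^n$, so averaging $\#\{c\in C:\norm{c-c_0}_p\le\rho\}$ over $c_0\in C$ inside a large window yields (up to a negligible boundary loss) some $c_0\in C$ with $\#\{c\in C:\norm{c-c_0}_p\le\rho\}\ge\delta\rho^n$. Translating $c_0$ to $0$ gives $X\subseteq\bB_p^n(\rho)$ with $\abs{X}\ge\delta\rho^n$ and $\norm{x-y}_p\ge 2$ for distinct $x,y\in X$. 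Parametrizing $\rho = 1/\sin(\theta/2)$, the theorem reduces to
\[
\abs{X} \le 2^{(c(\theta) + (2/p-1)\log_2\sin(\theta/2) + o(1))n},
\]
since then $\delta\le\abs{X}\rho^{-n}$ contributes an extra $\log_2\sin(\theta/2)$ to the exponent, turning the correction into $\tfrac2p\log_2\sin(\theta/2)$; one then takes $\limsup_n\frac1n\log_2$ and the infimum over $\theta$.

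The crux is this estimate for $\abs{X}$. Its first term $c(\theta)$ is the spherical‑code exponent, so the task is to extract from $X$ a Euclidean spherical code of minimal angle $\theta$ in dimension $n(1+o(1))$ while paying only the volumetric correction $(2/p-1)\log_2\sin(\theta/2)$, and then apply the Kabatiansky--Levenshtein bound. The naive route — using $\bB_2^n\subseteq\bB_p^n$ and $\norm{\cdot}_p\le\norm{\cdot}_2$ to view $X$ as a Euclidean configuration — is hopeless, since a unit $\ell_p$-ball has the volume of a Euclidean ball of radius $\asymp n^{1/2-1/p}$ and this loses a factor $n^{\Theta(n)}$. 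Instead one must use the special structure of $\ell_p$ for $p\ge2$: the $p$-uniform convexity inequality $\norm{\tfrac{x+y}{2}}_p^p + \norm{\tfrac{x-y}{2}}_p^p \le \tfrac12(\norm{x}_p^p + \norm{y}_p^p)$ — the $\ell_p$ replacement for the parallelogram law — forces the midpoints of pairs in $X$ into the strictly smaller ball $\bB_p^n((\rho^p-1)^{1/p})$, which is the $\ell_p$ counterpart of the Euclidean minimal‑angle condition (and reduces to $\norm{\tfrac{x+y}{2}}_2\le\rho\cos(\theta/2)$ when $p=2$); meanwhile the uniform measure on $\bB_p^n(\rho)$ concentrates on points all of whose coordinates have size $O(\rho n^{-1/p})$. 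Discarding a subexponential fraction of $X$ and conditioning on a coordinate profile should make $X$ behave Euclidean‑ly at the relevant scale, with the mismatch between $\norm{\cdot}_p$ and $\norm{\cdot}_2$ accounting precisely for $(2/p-1)\log_2\sin(\theta/2)$. Making this comparison rigorous — so that it feeds the Euclidean spherical‑code bound at angle $\theta$ with exactly this loss — is where I expect essentially all the difficulty to lie, and where the hypothesis $p\ge2$ is used.

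Granting the estimate for $\abs{X}$, the strict inequality $\gamma_p<-1/p$ is a short expansion of $F_p(\theta) := c(\theta) + \tfrac2p\log_2\sin(\theta/2)$ near $\theta=\pi/2$. There $\sin\theta = 1$, so $c(\pi/2) = 0$, and $\sin(\pi/4) = 2^{-1/2}$, giving $F_p(\pi/2) = -1/p$ — this merely reproduces the van der Corput--Schaake bound \cite{CS36}. Writing $\theta = \pi/2 - \e$, one computes $c(\pi/2-\e) = \tfrac12\e^2\log_2(1/\e) + O(\e^2)$ and $\tfrac2p\log_2\sin(\tfrac\pi4 - \tfrac\e2) = -\tfrac1p - \tfrac{\e}{p\ln 2} + O(\e^2)$, hence
\[
F_p(\pi/2 - \e) = -\frac1p - \frac{\e}{p\ln 2} + \tfrac12\e^2\log_2(1/\e) + O(\e^2),
\]
and since the linear term dominates $\e^2\log_2(1/\e)$ as $\e\to0^+$, this is $<-1/p$ for all sufficiently small $\e>0$. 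Therefore $\inf_{0<\theta<\pi/2}F_p(\theta) < -1/p$, and so $\gamma_p < -1/p$.
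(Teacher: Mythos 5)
Your outer reduction is sound and matches the paper's: the averaging argument producing a dense cluster of centers is essentially Lemma~\ref{lem:vertical-lp-project} (the paper lifts the cluster to an $\ell_p$-sphere one dimension up, but that changes nothing exponentially), and your expansion of $F_p$ at $\theta=\pi/2-\e$ correctly yields $-1/p-\e/(p\ln 2)+O(\e^2\log(1/\e))<-1/p$, exactly as in the paper's endgame. But the entire content of the theorem sits in the estimate you defer --- bounding $\abs{X}$ by a spherical-code count --- and there you have no proof, only a program (uniform convexity to control midpoints, concentration of coordinates, conditioning on a ``coordinate profile'') that you yourself flag as containing all the difficulty. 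That program is not obviously workable: the Kabatiansky--Levenshtein bound is a linear-programming bound that consumes an honest Euclidean spherical code with a prescribed minimal angle, and neither Clarkson's inequality nor a coordinate-profile decomposition produces one; moreover the target exponent you set, $c(\theta)+(2/p-1)\log_2\sin(\theta/2)$, is an artifact of forcing the $p=2$ decomposition onto general $p$ and is not the form that any natural reduction delivers.

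The missing idea is much simpler than what you propose. The paper applies the coordinatewise power map $x^*:=\operatorname{sgn}(x)\abs{x}^{p/2}$, which sends $\bS_p^{n-1}$ into $\bS_2^{n-1}$ (since $\norm{\bx^*}_2=\norm{\bx}_p$) and satisfies the elementary one-dimensional inequality $\abs{x^*-y^*}\ge 2^{1-p/2}\abs{x-y}^{p/2}$; summing over coordinates converts pairwise $\ell_p$-distance $\ge 2d$ into pairwise $\ell_2$-distance $\ge 2d^{p/2}$, i.e.\ $A_p(n,d)\le A_2(n,d^{p/2})$ (Lemma~\ref{lem:Ap-A2}). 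This is where $p\ge 2$ is used, and it is the only $\ell_p$-specific input: combined with the projection lemma at $d=\sin(\theta/2)^{2/p}$ it gives $\Delta_p(n)\le \sin(\theta/2)^{2n/p}A_2(n+1,\sin(\theta/2))$, and the KL bound finishes. Your volumetric heuristics (the $\asymp n^{1/2-1/p}$ mismatch, the $O(\rho n^{-1/p})$ coordinate sizes) are red herrings: no volume comparison between $\ell_p$- and $\ell_2$-balls is needed, only this distance-distorting change of variables on the sphere. As written, the proposal does not constitute a proof.
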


See \cref{fig:bounds} for a plot of the bounds.

\begin{figure}
    \centering
    \includegraphics{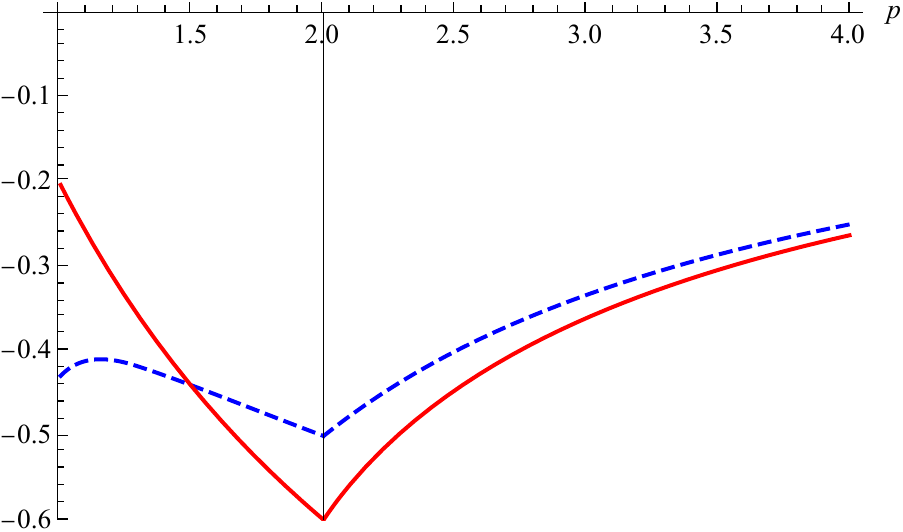}
    \caption{Upper bounds on the exponential rate $\gamma_p$ of translative packing densities of identical $\ell_p$-balls in high dimensions. 
    For $p> 2$, the dashed blue curve is the previous upper bound $-1/p$ and the solid red curve is our new upper bound.
    For $1 \le p < 2$, discussed in \cref{sec:remarks}, the dashed blue curve is \eqref{eq:RankinIII} due to Rankin~\cite{RankinIII} and the solid red curve is \eqref{eq:p<2-KL} derived from the Kabatiansky--Levenshtein~\cite{KL78} sphere packing bound.}
    \label{fig:bounds}
\end{figure}

\begin{remark}
	\cref{thm:main} with $p = 2$ recovers $\gamma_p \le \kappa_{\KL}$. Our upper bound on $\gamma_p$ is continuous with $p$, whereas the previous best bounds were not continuous\footnote{It is unknown whether $p \mapsto \gamma_p$ is continuous. \cref{lem:gamma-monotonicity} implies that $\gamma_p$ is continuous at all but at most countably many points.} at $p=2$. The fact that our bound at $p=2$ recovers the Kabatiansky--Levenshtein bound is not a coincidence, as our proof relies on the Kabatiansky--Levenshtein bound for spherical codes.
\end{remark}

\section{Proof of main theorem}\label{sec:proof}

\subsection{Kabatiansky--Levenshtein spherical code bound}
Denote the $\ell_p$-sphere in $\RR^n$ of radius $R$ by $\bS^{n - 1}_p(R) := \{\bx \in \RR^n : \norm{\bx}_p = R\}$ and the unit $\ell_p$-sphere by $\bS_p^{n - 1} := \bS_p^{n - 1}(1)$. Let $A_p(n, d)$ to be the maximum number of points on $\bS_p^{n - 1}$ with pairwise $\ell_p$-distance at least $2d$, i.e, an $\ell_p$-spherical code. Note that $A_p(n,d) = 1$ unless $d \in [0,1]$. Note that $A_2(n, \sin(\theta/2))$ is the maximum size of a spherical code in $\RR^n$ with pairwise angle at least $\theta$. Kabatiansky and Levenshtein~\cite{KL78} proved that for all\footnote{A simple geometric argument (see \cite[(17)]{Lev75}) shows that the upper bound \eqref{eq:kl-bound} can be improved for $\theta < \theta_\KL :=  1.0995\dots$ to $a(\theta_\KL) + \log_2\sin(\theta_\KL/2) - \log_2\sin(\theta/2)$, but this improvement does not benefit our bounds.} $0 < \theta <\pi/2$,
\begin{equation} \label{eq:kl-bound}
	\limsup_{n\to\infty}\frac{1}{n}\log_2 A_2(n,  \sin(\theta/2) )
	\le
	a(\theta)
\end{equation}
where
\[
a(\theta) := \frac{1 + \sin\theta}{2\sin\theta}\log_2\frac{1 + \sin\theta}{2\sin\theta} - \frac{1 - \sin\theta}{2\sin\theta}\log_2\frac{1 - \sin\theta}{2\sin\theta}.
\]
A projection argument (see \cite[Section 2]{CZ14}) shows that 
\[
\Delta_2(n) \le \sin^n(\theta/2) A_2(n+1, \sin(\theta/2)),
\]
so \eqref{eq:kl-bound} gives
\[
\lim_{n \to \infty} \frac{1}{n}\log_2\Delta_2(n) \le a(\theta) + \log_2\sin\frac\theta2.
\]
The bound $\gamma_2 \le \kappa_\KL = -0.5990\dots$ is obtained by choosing $\theta = \theta_\KL = 1.0995\dots$ to minimize the upper bound above.

\subsection{$\ell_p$-twist}
Fix $p \ge 2$. Define
\[
x^{*} := \operatorname{sgn}(x)|x|^{p/2}, \qquad x \in \RR.
\]
For $\bm x = (x_1, \dots, x_n) \in \RR^n$, write $\bm x^* := (x_1^*, \dots, x_n^*)$, and for $X \subseteq \RR^n$, write $X^* := \{\bm x^* : \bm x \in X\}$.

Observe that for all $x,y \in \RR$,
\begin{equation} \label{eq:transfer}
|x^* - y^*|\ge 2^{1 - p/2}|x - y|^{p/2}.
\end{equation}
Indeed, without loss of generality it suffices to consider two cases: $x \ge 0 \ge y$ and $x \ge y \ge 0$. The former case is an immediate consequence of H\"older's inequality (or the convexity of $x \mapsto x^{p/2}$). In the latter case, we have
\[
x^{p/2} - y^{p/2} \ge (x-y)^{p/2} \ge 2^{1-p/2} (x-y)^{p/2}.
\]
Here we use $(w+z)^{p/2} \ge w^{p/2} + z^{p/2}$ for $w,z > 0$, which can be proved by first normalizing to $w+z=1$ and noting that $w^{p/2} + z^{p/2} \le w + z = 1$.

\begin{lemma} \label{lem:Ap-A2}
	For all $p \ge 2$ and $d \in (0,1]$, we have $A_p(n, d)\le A_2(n, d^{p/2})$.
\end{lemma}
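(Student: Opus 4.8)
The plan is to use the $\ell_p$-twist map $\bm x \mapsto \bm x^*$ to transport an $\ell_p$-spherical code to an $\ell_2$-spherical code on a sphere of (possibly different) radius, and then rescale. First I would take a set of points $\bm v_1, \dots, \bm v_N \in \bS_p^{n-1}$ realizing $N = A_p(n,d)$, so that $\norm{\bm v_i}_p = 1$ for all $i$ and $\norm{\bm v_i - \bm v_j}_p \ge 2d$ for all $i \ne j$. Applying the twist coordinatewise, each $\bm v_i^*$ satisfies $\norm{\bm v_i^*}_2^2 = \sum_k |(v_i)_k|^p = \norm{\bm v_i}_p^p = 1$, so the twisted points all lie on the unit $\ell_2$-sphere $\bS_2^{n-1}$. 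This is the key geometric observation: the twist sends $\bS_p^{n-1}$ exactly onto $\bS_2^{n-1}$.

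Next I would control pairwise $\ell_2$-distances of the twisted points using the transfer inequality \eqref{eq:transfer}. For each coordinate $k$, $|(v_i)_k^* - (v_j)_k^*| \ge 2^{1-p/2} |(v_i)_k - (v_j)_k|^{p/2}$, hence
\[
\norm{\bm v_i^* - \bm v_j^*}_2^2 = \sum_k |(v_i)_k^* - (v_j)_k^*|^2 \ge 2^{2-p}\sum_k |(v_i)_k - (v_j)_k|^p = 2^{2-p}\norm{\bm v_i - \bm v_j}_p^p \ge 2^{2-p}(2d)^p = 4d^p.
\]
Thus $\norm{\bm v_i^* - \bm v_j^*}_2 \ge 2 d^{p/2}$, so $\{\bm v_i^*\}$ is an $\ell_2$-spherical code on $\bS_2^{n-1}$ of $N$ points with pairwise $\ell_2$-distance at least $2 d^{p/2}$. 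By definition of $A_2(n,\cdot)$ this gives $N \le A_2(n, d^{p/2})$, which is exactly the claimed bound $A_p(n,d) \le A_2(n, d^{p/2})$.

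The only real subtlety — and the point I would be most careful about — is bookkeeping the exponents in the transfer step: the factor $2^{1-p/2}$ per coordinate squares to $2^{2-p}$, which is exactly the constant needed to cancel the $2^{p}$ coming from $(2d)^p$ and recover a clean $2d^{p/2}$ on the right. There is no dimension loss and no passage to $n+1$ here (that happens later, in the projection step when deducing \cref{thm:main}), so the lemma is essentially a one-line consequence of \eqref{eq:transfer} once the norm identity $\norm{\bm x^*}_2^2 = \norm{\bm x}_p^p$ is observed. I should also note the hypothesis $d \in (0,1]$ is only needed so that $d^{p/2} \in (0,1]$ and the right-hand side $A_2(n, d^{p/2})$ is the genuine spherical-code quantity rather than trivially $1$; for $d > 1$ both sides equal $1$ anyway.
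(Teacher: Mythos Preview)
Your proof is correct and follows essentially the same approach as the paper: apply the twist $\bm x \mapsto \bm x^*$ to an optimal $\ell_p$-spherical code, use the identity $\norm{\bm x^*}_2 = \norm{\bm x}_p$ to land on $\bS_2^{n-1}$, and apply the coordinatewise transfer inequality \eqref{eq:transfer} to obtain the $\ell_2$-separation $2d^{p/2}$. The computations and constants match the paper's proof exactly.
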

\begin{proof}
Let $X \subseteq \bS_p^{n-1}$ with $|X| = A_p(n,d)$ and $\norm{\bm x - \bm y}_p \ge 2d$ for all distinct $\bm x, \bm y \in X$. We have $\norm{\bm x^*}_2 = \norm{\bm x}_p = 1$ for all $\bm x \in X$, so $X^* \subseteq \bS_2^{n-1}$. For distinct $\bm x, \bm y\in X$, we have
\[
\norm{\bm x^* - \bm y^*}_2^2 = \sum_{i = 1}^n |x_i^* - y_i^*|^2
\ge 2^{2 - p}\sum_{i = 1}^n |x_i - y_i|^p
\ge 2^2d^p,
\]
by \eqref{eq:transfer}. Thus $X^*$ is a subset of $\bS_2^{n-1}$ whose points have pairwise $\ell_2$-distance at least $2d^{p/2}$. Hence $|X| = |X^*| \le A_2(n, d^{p/2})$.	
\end{proof}

\begin{remark}
The same argument shows that $A_p(n, d)\le A_q(n, d^{p/q})$ for all $1\le q\le p$ and $d\in (0, 1]$.
\end{remark}


\begin{lemma}\label{lem:vertical-lp-project}
For every $p \ge 1$, $d \in (0,1]$, and $n \in \NN$, we have $\Delta_p(n)\le d^n A_p\left(n+1, d\right)$.
\end{lemma}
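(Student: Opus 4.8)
The plan is to carry out the classical ``add one dimension and lift to a sphere'' argument, now in the $\ell_p$ setting. First I would reduce to periodic packings: by a standard approximation, $\Delta_p(n)$ equals the supremum, over all lattices $\Lambda \subseteq \RR^n$, of the densities of $\Lambda$-periodic packings of unit $\ell_p$-balls. So fix such a packing, with $\Lambda$ of covolume $V$ and with $N$ ball centers per fundamental domain $F$; then its density is $\delta = N\vol(\bB_p^n)/V$ and the centers are pairwise at $\ell_p$-distance $\ge 2$. It suffices to show $\delta \le d^n A_p(n+1,d)$.

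Next comes an averaging step. For $\bm v \in \RR^n$, let $g(\bm v)$ be the number of ball centers contained in the translate $\bm v + \bB_p^n(1/d)$. Integrating $g$ over $F$ and unfolding against the tiling $\RR^n = \bigsqcup_{\lambda\in\Lambda}(F+\lambda)$ shows $\int_F g(\bm v)\,d\bm v = N\vol(\bB_p^n(1/d)) = Nd^{-n}\vol(\bB_p^n)$, so the average value of $g$ over $F$ is exactly $\delta/d^n$ — with no boundary error, even though $\bB_p^n(1/d)$ may be much larger than $F$. Hence there is a point $\bm v_0$ for which $\bm v_0 + \bB_p^n(1/d)$ contains at least $m := \ceil{\delta/d^n} \ge \delta/d^n$ centers.

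Finally I would transplant these $m$ centers onto $\bS_p^n \subseteq \RR^{n+1}$. Subtracting $\bm v_0$ and scaling by $d$ produces $m$ points $\bm x_1,\dots,\bm x_m \in \bB_p^n(1)$ with pairwise $\ell_p$-distance $\ge 2d$. Apply the ``$\ell_p$-hemisphere'' map $\Phi(\bm x) := (\bm x,\,(1-\norm{\bm x}_p^p)^{1/p})$, which is well defined since $\norm{\bm x}_p \le 1$; then $\norm{\Phi(\bm x)}_p^p = \norm{\bm x}_p^p + (1-\norm{\bm x}_p^p) = 1$, so $\Phi(\bm x) \in \bS_p^n$, and appending a coordinate cannot decrease an $\ell_p$-distance, so $\norm{\Phi(\bm x_i) - \Phi(\bm x_j)}_p \ge \norm{\bm x_i - \bm x_j}_p \ge 2d$. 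Thus $\Phi(\bm x_1),\dots,\Phi(\bm x_m)$ form an $\ell_p$-spherical code on $\bS_p^n$ with minimum $\ell_p$-distance $2d$, so $m \le A_p(n+1,d)$. Combining with $m \ge \delta/d^n$ gives $\delta \le d^n A_p(n+1,d)$, and taking the supremum over periodic packings finishes the proof.

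I do not expect a serious obstacle: essentially all of the content is in the averaging identity, and its only delicate feature is that counting centers in a fixed ball $\bB_p^n(1/d)$ and averaging over translates returns the clean factor $d^{-n}$ with no error term — which is guaranteed precisely by the fundamental-domain unfolding, independent of how $\bB_p^n(1/d)$ sits relative to $F$. If one wishes to avoid invoking periodic packings, one can instead cover a large ball $\bB_p^n(R)$ by roughly $(Rd)^n$ translates of $\bB_p^n(1/d)$, apply pigeonhole, and let $R \to \infty$; I would nonetheless favor the periodic formulation, since there the averaging is an exact identity.
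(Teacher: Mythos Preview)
Your proof is correct and follows essentially the same approach as the paper: an averaging/pigeonhole step to find many packing centers inside a single $\ell_p$-ball, followed by lifting those centers to the $\ell_p$-sphere one dimension higher. The only cosmetic differences are that the paper scales the packing (radius-$d$ balls, unit test ball) rather than the test ball (unit balls, radius-$1/d$ test ball, then rescale), and the paper runs the averaging on a general packing with an $o_{R\to\infty}(1)$ error instead of reducing to periodic packings for an exact identity.
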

\begin{proof}
Let $\rho < \Delta_p(n)$ be arbitrary.
Consider a translative packing $\{\bm x + \bB_p^n(d): \bm x \in X\}$ in $\RR^n$ with density greater than $\rho$, where $X \subseteq \RR^n$ is the set of centers of the $\ell_p$-balls. By an averaging argument\footnote{A uniform random translation of a unit $\ell_p$-ball inside $[-R,R]^n$ contains more than $d^{-n} \rho + o_{R \to \infty}(1)$ points of $X$.}, there exists some translate of a unit $\ell_p$-ball that contains at least $d^{-n} \rho$ points of $X$. Translating $X$ if necessary, we may assume that $|X \cap \bB_p^n| \ge d^{-n} \rho$. Add an $(n+1)$-st coordinate to each point in $X \cap \bB_p^n$ to obtain a set $X'$ of points on the unit $\ell_p$-sphere in $\RR^{n+1}$. In other words, $X'$ is obtained by projecting the points of $X$ contained in the unit ball ``upward'' to the hemisphere one dimension higher. Since the points in $X$ are pairwise at least $2d$ apart in $\ell_p$-distance, the same holds for $X'$. So $X'$ is an $\ell_p$-spherical code whose points are pairwise separated by $\ell_p$-distance at least $2d$, and hence $d^{-n} \rho \le |X \cap \bB_p^n| = |X'| \le A_p(n+1,d)$. Since $\rho$ can be arbitrarily close to $\Delta_p(n)$, we obtain the claimed inequality.
\end{proof}

\begin{remark}
As in \cite{CZ14}, the above argument can be modified so that we do not need to add a new dimension	when $d \in [1/2,1]$, resulting in a slightly better bound $\Delta_p(n) \le d^{-n} A_p(n,d)$. We omit the details of this modification since this improvement does not affect the exponential asymptotics.
\end{remark}

\begin{proof}[Proof of \cref{thm:main}]
	Applying \cref{lem:vertical-lp-project,lem:Ap-A2}, we have, for every $0 < \theta < \pi/2$,
	\[
	\Delta_p(n) \le \sin(\theta/2)^{2n/p} A_p(n+1, \sin(\theta/2)^{2/p}) \le \sin(\theta/2)^{2n/p}A_2(n+1, \sin(\theta/2)).
	\]
	Applying \eqref{eq:kl-bound}, we obtain
	\[
	\gamma_p = \limsup_{n \to \infty} \frac1n \log_2 \Delta_p(n) \le 
	a(\theta)  + \frac{2}{p}\log_2\sin(\theta/2).
	\]
	The main result follows by taking the infimum of the bound over $\theta \in (0,\pi/2)$.
	
	Setting $\theta = \pi/2 - \eta$, we have, with $p \ge 2$ fixed and $\eta \to 0^+$,
	\[
	\gamma_p \le -\frac{1}{p} - \frac{\eta}{p\ln 2} + o(\eta).
	\]
	So choosing $\eta > 0$ sufficiently small gives $\gamma_p < -1/p$ for all $p\ge 2$.
\end{proof}

\section{Remarks} \label{sec:remarks}

\subsection{Asymptotics}
Setting $\theta = \pi/2 - (p \ln p)^{-1}$, we obtain
\[
\gamma_p\le -\frac{1}{p} - \frac{1}{\ln 4}\cdot\frac{1}{p^2\ln p} + O\left(\frac{1}{p^2\ln^2 p}\right), \quad \text{as } p\to \infty.
\]
Taking $\theta = \theta_\KL$ gives
\[
\gamma_p\le \kappa_\KL + \frac{2 - p}{p}\log_2\sin\frac{\theta_{KL}}{2}, \quad \text{for all } p \ge 2.
\]
Thus, as $\epsilon \to 0^+$,
\[
\gamma_{2 + \epsilon}\le\gamma_\KL - \left(\frac{1}{2}\log_2\sin \frac{\theta_\KL}{2}\right)\epsilon + O(\epsilon^2) = (-0.5990\dots) + (0.4650\dots)\epsilon + O(\epsilon^2).\]

\subsection{Review of other bounds on $\gamma_p$} \label{sec:review}
Here we survey other existing bounds on $\gamma_p$.

\medskip

For $p = 2$, the best known bounds are $-1 \le \gamma_2 \le \kappa_\KL = -0.5990\dots$ as discussed earlier.

\medskip

For $p > 2$, the best known upper bounds are the ones given in this paper. For lower bounds, extending on methods developed by Rush~\cite{Rush89} and Rush--Sloane~\cite{RS87}, Elkies, Odlyzko, and Rush~\cite{EOR91} proved $\gamma_p > -1$ for all $p > 2$, thereby exponentially beating the Minkowski--Hlawka lower bound. See \cite{EOR91} for the precise bound. Their bounds have the following asymptotics:
\[
\gamma_p \ge - (1+o(1))\frac{\ln \ln p}{p \ln 2}, \quad \text{as } p \to \infty,
\]
and
\[
\gamma_{2+\epsilon} \ge -1 +  \paren{\frac{\sqrt{\pi} \zeta(3)}{2 \ln 2} + o(1)}  \frac{\epsilon}{\ln^{3/2}(1/\epsilon)}, \quad \text{as } \epsilon \to 0^+.
\]
Here $\zeta$ denotes the Riemann zeta function. See \cite{LX08} for some later improvements using algebraic-geometric codes for some specific integers $p$.

\medskip

For $1\le p < 2$, no improvement over the Minkowski--Hlawka lower bound $\gamma_p \ge -1$ is known. The best upper bound on $\gamma_p$ is due to Rankin~\cite{RankinIII}, based on Blichfeldt's method~\cite{Bli29}:
\begin{equation}
    \label{eq:RankinIII}
    \gamma_p \le 
    \inf_{\frac12 \leq \frac1q \le \frac13 \paren{ 1 + \frac1p}} \left( b(p) - b(q) - 1 + 1/p + (1/q - 1/p) \log_2\left( \frac{2-1/q}{1-1/q}\right) \right)
\end{equation}
where
\begin{equation}
	\label{eq:b}
b(p) := \lim_{n\to\infty} \frac{1}{n} \log_2 \vol \bB_p^n(n^{1/p}) = 1 + 
  \log_2 \Gamma\left(1 + \frac{1}{p}\right) + \frac{1}{p} \log_2 (pe).
\end{equation}
Recall that $\vol \bB_p^n= 2^n\Gamma(1+1/p)^n/\Gamma(1+n/p)$. 

For packings of congruent cross-polytopes (i.e., unit $\ell_1$-balls) allowing rotations,
Fejes T\'oth, Fodor, and V\'igh \cite{TFV15} proved an exponentially decaying upper bound in high dimensions. For translative packing of unit $\ell_1$-balls, the upper bound \cref{eq:RankinIII} remains best known in high dimensions.

We note that the above bound \eqref{eq:RankinIII} can be improved on the region $p \in [1.494\dots, 2)$ using the Kabatiansky--Levenshtein bound via the following folklore observation.

\begin{lemma} \label{lem:gamma-monotonicity}
For $1 \le p \le q \le \infty$, $\gamma_p - b(p) \le \gamma_q - b(q)$.
\end{lemma}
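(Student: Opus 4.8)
The plan is to establish the stronger, dimension-by-dimension inequality
\[
\Delta_q(n) \ \ge\ \Delta_p(n)\cdot\frac{\vol \bB_q^n(n^{1/q})}{\vol \bB_p^n(n^{1/p})}
\qquad\text{for every } n\in\NN,
\]
with the convention $n^{1/\infty}=1$, and then to take $\tfrac1n\log_2(\cdot)$ followed by $\limsup_{n\to\infty}$. Since the volume ratio has the genuine limit $b(q)-b(p)$ by \eqref{eq:b}, and a $\limsup$ of a sum splits off a convergent term, this yields $\gamma_q - b(q) \ge \gamma_p - b(p)$, which is the claim.

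To prove the displayed inequality I would start from the elementary norm comparison: for $1\le p\le q\le\infty$ and $\bx\in\RR^n$,
\[
\norm{\bx}_q \ \le\ \norm{\bx}_p \ \le\ n^{1/p-1/q}\,\norm{\bx}_q,
\]
where the first bound is monotonicity of $\ell_p$-norms in $p$ and the second is the power-mean (H\"older) inequality. Consequently, if $X\subseteq\RR^n$ has all pairwise $\ell_p$-distances at least $2$, then it has all pairwise $\ell_q$-distances at least $2n^{1/q-1/p}$, so $\{\bx+\bB_q^n(n^{1/q-1/p}):\bx\in X\}$ is a translative packing of (shrunken, since $1/q-1/p\le 0$) $\ell_q$-balls.

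Now fix $\rho<\Delta_p(n)$ and take a translative packing $\{\bx+\bB_p^n:\bx\in X\}$ of density greater than $\rho$; then $X$ has point density greater than $\rho/\vol\bB_p^n$, and the associated $\ell_q$-ball packing above has density greater than $\rho\cdot\vol\bB_q^n(n^{1/q-1/p})/\vol\bB_p^n$. Because packing density is invariant under global rescaling, this is a lower bound for $\Delta_q(n)$. Using $\vol\bB_q^n(n^{1/q-1/p}) = n^{n(1/q-1/p)}\vol\bB_q^n$ together with $\vol\bB_p^n(n^{1/p}) = n^{n/p}\vol\bB_p^n$ and $\vol\bB_q^n(n^{1/q}) = n^{n/q}\vol\bB_q^n$ rewrites the bound as $\rho\cdot\vol\bB_q^n(n^{1/q})/\vol\bB_p^n(n^{1/p})$, and letting $\rho\to\Delta_p(n)$ gives the displayed inequality. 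The case $q=\infty$ is included verbatim with $1/q=0$ and $\bB_\infty^n(1)=[-1,1]^n$, and it recovers the known bound $\gamma_p\le b(p)-1$.

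There is no genuine obstacle here — this is why the statement is ``folklore'' — but two points deserve a line of care: the density bookkeeping, which is routine once one invokes scale-invariance of packing density so that no region needs to be tracked explicitly; and the final $\limsup$ step, which is harmless precisely because the volume-ratio term converges to $b(q)-b(p)$, so $\limsup_n\big(\tfrac1n\log_2\Delta_p(n) + \tfrac1n\log_2\tfrac{\vol\bB_q^n(n^{1/q})}{\vol\bB_p^n(n^{1/p})}\big) = \gamma_p + b(q)-b(p)$.
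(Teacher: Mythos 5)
Your proposal is correct and follows essentially the same route as the paper: both hinge on the power-mean comparison $n^{-1/p}\norm{\bx}_p \le n^{-1/q}\norm{\bx}_q$ to convert a packing of normalized $\ell_p$-balls into one of normalized $\ell_q$-balls with the same centers, yielding the dimension-wise inequality $\Delta_q(n)/\vol\bB_q^n(n^{1/q}) \ge \Delta_p(n)/\vol\bB_p^n(n^{1/p})$ before taking $\tfrac1n\log_2$ and $\limsup$. The only (immaterial) difference is that the paper phrases the transfer as the containment $\bB_q^n(n^{1/q}) \subseteq \bB_p^n(n^{1/p})$ rather than as a rescaling of unit balls.
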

\begin{proof}
By monotonicity of norms, we have $n^{-1/p}\|\bm x\|_p \leq n^{-1/q} \|\bm x\|_q$, so 
$\bB_p^n(n^{1/p}) \supseteq \bB_q^n(n^{1/q})$. Any packing of $\bB_p^n(n^{1/p})$ can be shrunk into a packing of $\bB_q^n(n^{1/q})$. Hence
\[
\frac{\Delta_{p}(n)}{\vol \bB_p^n(n^{1/p})}
\le \frac{\Delta_{q}(n)}{\vol \bB_p^n(n^{1/q})}.
\]
Taking log, dividing by $n$, and letting $n \to \infty$ yields the lemma.
\end{proof}

Using $\gamma_2 \le \kappa_{\textrm{KL}}$, we find that
\begin{equation} \label{eq:p<2-KL}
	\gamma_p \le \kappa_{\textrm{KL}} - b(2) + b(p) = (-0.5990\dots) - b(2) + b(p) \quad \text{for } 1 \le p < 2.
\end{equation}
Thus
\[
\gamma_p \le \min\{ 
\text{RHS of \eqref{eq:RankinIII}},
\text{RHS of \eqref{eq:p<2-KL}}
\}
\]
See \cref{fig:bounds} for an illustration of the above bounds.

\section*{Acknowledgments}

This work began during Y.Z.'s internship at Microsoft Research New England, and he would like to thank Henry Cohn for discussions and mentorship and Microsoft Research for its hospitality.


\end{document}